\numberwithin{equation}{section}
\definecolor{myred}{rgb}{0.75,0,0}
\definecolor{mygreen}{rgb}{0,0.5,0}
\definecolor{myblue}{rgb}{0,0,0.65}
  \def\hg{{\mathfrak h}}
    \def\RM{{\mathbb{R}}}
    \def\ZM{{\mathbb{Z}}}
    \def\BC{{\mathcal{B}}}
    \def\HC{{\mathcal{H}}}
\newcommand{\nc}{\newcommand} \newcommand{\renc}{\renewcommand}
\def\a{\alpha}
\renc{\l}{\lambda}
\newcommand{\rdots}{\mathinner{ \mkern1mu\raise1pt\hbox{.}
    \mkern2mu\raise4pt\hbox{.}
    \mkern2mu\raise7pt\vbox{\kern7pt\hbox{.}}\mkern1mu}}
\DeclareMathOperator{\gr}{gr}
\DeclareMathOperator{\HOM}{\mathrm{Hom}}
\def\un{\underline}
\def\to{\rightarrow}
\def\laongto{\laongrightarrow}
\def\onto{\twoheadrightarrow}
\nc{\triright}{\stackrel{[1]}{\to}}
\nc{\laongtriright}{\stackrel{[1]}{\laongto}}
\nc{\Hb}{H^\bullet}
\nc{\Br}{\mathcal{B}}
\nc{\HotRR}{{}_R\mathcal{K}_R}
\nc{\HotR}{\mathcal{K}_R}
\nc{\excise}[1]{}
\nc{\defect}{\text{df}}
\nc{\h}[1]{\underline{H}_{#1}}
\nc{\Ga}{\mathbb{G}_a} 
\nc{\Gm}{\mathbb{G}_m} 
\nc{\Perv}{{\mathbf{P}}}
\nc{\IH}{{\mathrm{IH}}}
\nc{\ic}{\mathbf{IC}}
\nc{\gl}{{\mathfrak{gl}}}
\renc{\sl}{{\mathfrak{sl}}}
\renc{\sp}{{\mathfrak{sp}}}
\renc{\Im}{\textrm{Im}}
\nc{\HBM}{H^{BM}}
 \DeclareMathOperator{\Hom}{Hom}
\DeclareMathOperator{\Rep}{\mathrm{Rep}}
\DeclareMathOperator{\id}{id}
\newtheorem{thm}{Theorem}[section]
\newtheorem{lem}[thm]{Lemma}
\newtheorem{prob}[thm]{Problem}
\theoremstyle{definition}
\newtheorem{notation}[thm]{Notation}
\newtheorem{ex}[thm]{Example}
\theoremstyle{remark}
\newtheorem{remark}[thm]{Remark}
\newcommand{\ra}{\rightarrow}
\newcommand{\into}{\hookrightarrow}
\nc{\simto}{\stackrel{\sim}{\to}}
\nc{\ext}{\textrm{ext}}
\nc{\fW}{{}^f W}
\nc{\pdot}{ \bullet_p}
\nc{\wdot}{ \bullet}
\nc{\la}{\langle}
\renc{\ra}{\rangle}
\nc{\Wf}{W}
\nc{\Wa}{\mathcal{W}}
\nc{\Sa}{\mathcal{S}}
\nc{\Wae}{\mathcal{W}^{\mathrm{ext}}}
\nc{\Fr}{\mathrm{Fr}} 
\nc{\Repp}{\Rep_0}
\nc{\Repe}{\mathrm{Rep}_0^{\mathrm{ext}}}
\nc{\AntiS}{\mathrm{AS}}
\nc{\CAS}{\mathcal{AS}}
\nc{\HCe}{\HC^{\mathrm{ext}}}
\nc{\Kar}{\mathrm{Kar}}
\nc{\bs}{\mathrm{BS}}
\nc{\Iw}{\mathrm{Iw}}
\nc{\ev}{\mathrm{ev}}
\nc{\SL}{\mathrm{SL}}
\nc{\GL}{\mathrm{GL}}
\nc{\Sp}{\mathrm{Sp}}
\nc{\Eeight}{\mathrm{E}_8}
\nc{\Gtwo}{\mathrm{G}_2}
\nc{\He}{\mathrm{H}} 
\nc{\Hee}{\mathrm{H}^\mathrm{ext}} 
\nc{\Zvv}{\mathbb{Z}[v]}
\nc{\Zv}{\mathbb{Z}[v^{\pm 1}]}
\nc{\HCat}{\mathcal{H}} 
\nc{\HCate}{\mathcal{H}^\mathrm{ext}} 
\nc{\pcan}{{}^p\un{h}}
\DeclareMathOperator{\df}{df}
\DeclareMathOperator{\LL}{LL}
\DeclareMathOperator{\CLL}{CLL}
\nc{\Bim}{\textrm{Bim}_R}
\renewcommand{\bot}{\textrm{bot}}
\title{Kazhdan-Lusztig polynomials and subexpressions}
\author{Nicolas Libedinsky}
\author{Geordie Williamson}
\date{\today.  {\color{red}Preliminary version: NOT FOR DISTRIBUTION}}
\date{\today}
\begin{document}

\begin{abstract} 
We refine an idea of Deodhar, whose goal is a counting formula for Kazhdan-Lusztig 
polynomials. This is a consequence of a simple observation that one
can use the solution of Soergel's conjecture to make 
 ambiguities involved in defining certain morphisms between Soergel
bimodules  in characteristic zero (double leaves) disappear.
\end{abstract}

\maketitle

\section{Introduction}

Let $(W,S)$ be a Coxeter system. To any pair of elements $(x,y)$ of
$W$, Kazhdan and Lusztig \cite{KLp} associated a polynomial
\[
h_{x,y} \in \ZM[v].
\]
These polynomials are ubiquitous in representation
theory; they appear in character formulas for simple representations
of complex semi-simple Lie algebras, real Lie groups, quantum groups,
finite reductive groups \dots On the other hand, they are still far
from being well understood. For example, in several applications the
coefficient of $v$ (the so-called $\mu$-coefficient) plays a crucial role, however even describing when it is
non-zero appears extremely subtle.

In their original paper Kazhdan and Lusztig conjectured that the polynomials
$h_{x,y}$ have non-negative coefficients. This conjecture was proved
in \cite{KLSchubert}  if the underlying Coxeter group is a Weyl or
affine Weyl group. The proof proceeds by  interpreting $h_{x,y}$ as
the Poincar\'e polynomial of the local intersection cohomology of a
Schubert variety.

Kazhdan and Lusztig's positivity conjecture was proved in general in
\cite{EW2}. The proof is via a study of Soergel bimodules 
associated to the underlying Coxeter system. Using Soergel bimodules
one can produce a space $D_{x,y}$ which behaves as though it were the local
intersection cohomology of a Schubert variety. The Kazhdan-Lusztig
polynomial $h_{x,y}$ gives the graded dimension of  $D_{x,y}$. This
implies immediately that $h_{x,y}$ has non-negative coefficients. The
theory also goes quite some way towards explaining what
Kazhdan-Lusztig polynomials ``are'' for arbitrary Coxeter groups.

The aim of this paper is to explain a strategy to use Soergel bimodues
to further our combinatorial understanding of Kazhdan-Lusztig
polynomials. Our goal (not achieved in this paper) is a ``counting formula'' for Kazhdan-Lusztig
polynomials. 
 Ideally we would like to produce a canonical
basis for the space $D_{x,y}$. That is, we would like to find a
set $X_{x,y}$ and a degree statistic $d : X_{x,y} \to \ZM_{\ge 0}$ such that
if we use $X_{x,y}$ and $d$ to build a positively graded vector space,
we have a canonical isomorphism:
\[
\bigoplus_{e \in X_{x,y}} \RM e \simto D_{x,y}.
\]
Taking graded dimensions we would deduce a counting formula:
\[
h_{x,y} = \sum_{e \in X_{x,y}} v^{d(e)}.
\]
We expect the sets $X_{x,y}$ to reflect in a subtle way the
combinatorics of Kazhdan-Lusztig polynomials.  If shown to exist, they
would open the door to a deeper combinatorial study of Kazhan-Lusztig polynomials.

A proposal for such a counting formula was made by Deodhar in
\cite{Deo2}. He considers the set $\widetilde{X}_{x,\un{y}}$ of all
subexpressions for $x$ of a fixed reduced expression $\un{y}$ of $y$
(see Section \ref{Cox} for more details on our notation). On this set he defines a
statistic (``Deodhar's defect'')
\[
\df : \widetilde{X}_{x,\un{y}} \to \ZM.
\]
Assuming that Kazhdan-Lusztig polynomials have non-negative
coefficients (now known unconditionally), Deodhar proves the existence of a subset $X_{x,\un{y}}^D \subset
\widetilde{X}_{x,\un{y}}$ such that
\begin{equation}
  \label{eq:Dd}
h_{x,y} = \sum_{e \in X^D_{x,\un{y}}} v^{d(e)}.  
\end{equation}
Although initially appealing, Deodhar's proposal suffers from serious
drawbacks. The principal one being that the set $X_{x,\un{y}}^D$ is
not canonical.

There are two sources of non-canonicity. The first is
that $\widetilde{X}_{x,\un{y}}$ depends on a reduced
expression of $\un{y}$. We do not regard this dependence as
particularly worrisome. Indeed, there are many objects in Lie theory
which depend on a choice of reduced expression, and (if canonical up
to this point) relating them for different reduced expressions is
potentially a fascinating question. The second source of
non-canonicity is more concerning: Even for a fixed reduced expression
$\un{y}$ there are in general many possible choices of subsets
$X_{x,\un{y}}^D \subset \widetilde{X}_{x,\un{y}}$ satisfying
\eqref{eq:Dd}. In Deodhar's framework there is no way to make a
distinguished choice. This is as a serious obstacle.

Let $x, \un{y}$ be as above. Using Soergel bimodules one can produce a space $D_{x, \un{y}}$ containing  $D_{x, {y}}$ as a  canonical direct summand. In other words, we have a canonical map $\pi: D_{x, \un{y}}\onto D_{x, {y}}.$ The following is the main result of this paper. 

\begin{thm}\label{main}
There is a \emph{canonical} isomorphism of graded vector spaces
\[
\CLL : \bigoplus_{e \in \widetilde{X}_{x,\un{y}}} \RM e \xrightarrow{\sim} D_{x,\un{y}}.
\]
where the left hand side is graded by Deodhar's defect, i.e. the generator $e \in
\widetilde{X}_{x,\un{y}}$ has degree $\df(e)$. (CLL stands for ``Canonical light leaves''.)

\end{thm}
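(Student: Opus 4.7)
The plan is to revisit Libedinsky's inductive light leaves construction, isolate the precise points at which the construction depends on non-canonical choices, and eliminate those choices using the solution of Soergel's conjecture in characteristic zero (Elias--Williamson).

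Recall that Libedinsky builds light leaves $\LL_e$, indexed by subexpressions $e \in \widetilde{X}_{x,\un{y}}$, by induction on $\ell(\un{y})$. If $\un{y} = \un{y}'s$ and $e = (e',\varepsilon)$ with $\varepsilon \in \{0,1\}$, then $\LL_e$ is obtained from $\LL_{e'} \otimes \id_{B_s}$ by post-composition with an elementary morphism whose form is dictated by whether multiplication by $s$ goes up or down and by the value of $\varepsilon$. In two of the four cases this elementary morphism is rigidly determined (a structure map of $B_s$, or a canonical cap or cup); in the remaining two cases it involves a choice of auxiliary morphism taking values in a specific graded piece of $\Hom^{\bullet}(\bs(\un{y}'), B_z)$, where $z$ is the element reached by $e'$. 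The first concrete task is to make this ambiguity explicit: any two valid recursive constructions of $\LL_e$ differ by an element of this graded hom space.

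The second and main task is to show that, in characteristic zero, there is a canonical choice of the auxiliary morphism. Here the solution of Soergel's conjecture enters: it identifies the graded dimensions of $\Hom^{\bullet}$ between indecomposable Soergel bimodules with Kazhdan--Lusztig pairings, and positivity forces the ambiguity to live in a degree where the relevant morphism admits a canonical representative. Concretely, although the decomposition of $\bs(\un{y}')$ into indecomposable Soergel bimodules is not itself canonical, the projectors onto isotypic components are canonical in characteristic zero, and the canonical auxiliary morphism is constructed from these projectors. Feeding this choice into the recursion defines, for each $e$, a canonical element $\LL_e \in D_{x,\un{y}}$ of degree $\df(e)$, and $\CLL$ is then defined by $e \mapsto \LL_e$.

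The final task is to check that $\CLL$ is an isomorphism of graded vector spaces. This is essentially automatic from Libedinsky's theorem: the canonical light leaves defined above are in particular a valid instance of the classical (non-canonical) light leaves construction, and any such choice is known to produce a basis of $D_{x,\un{y}}$ with the stated degrees. The heart of the argument is therefore the middle task: identifying the ambiguity space at each inductive step and exhibiting a canonical section using Soergel's conjecture and positivity. This is where one expects to spend the most effort, and where the characteristic-zero hypothesis is essential—without positivity one cannot rule out that the ambiguity persists in a non-canonical way.
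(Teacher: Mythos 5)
Your overall strategy --- rerun the light leaves recursion and use Soergel's conjecture in characteristic zero to kill the choices --- is exactly the paper's strategy, and your reduction of the problem to the elementary one-step maps is the right reduction. But your middle step, which you correctly identify as the heart of the matter, has a genuine gap: one-dimensionality of the relevant Hom spaces (which is what Soergel's conjecture plus the hom formula and the positivity of the pairing $(b_x,b_y)$ actually deliver --- this is Lemma \ref{hom} of the paper) determines each elementary morphism only \emph{up to a nonzero scalar}, and canonicity of the isotypic projectors does not repair this. Even if the idempotent of $B_{\un{y}'}B_s$ cutting out the top summand is canonical as an endomorphism, factoring it as $i\circ p$ through an abstract representative of $B_z$ still allows the rescaling $p\mapsto\lambda p$, $i\mapsto\lambda^{-1}i$; propagated through the recursion this rescales each $\LL_e$ by an $e$-dependent product of scalars, so you obtain $\CLL(e)$ only as a line in $D_{x,\un{y}}$, not as a vector. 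Since the theorem asserts a canonical isomorphism (and since $D_{x,\un{y}}=\HOM_{\not<x}(B_{\un{y}},B_x)\otimes_R\RM$ itself is only pinned down once a representative of $B_x$ is rigidified), this ambiguity must be resolved, and nothing in your argument does so.

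The paper's missing ingredient is the canonical bottom element: $c_\bot^{\un{x}}=1\otimes\cdots\otimes 1\in B_{\un{x}}$ and its image $c_\bot^x\in B_x$ under the favourite projector. This simultaneously (i) rigidifies $B_x$ up to \emph{unique} isomorphism, as the pair $(B_x,c_\bot^x)$, and (ii) normalizes each elementary map: since $B_xB_s$ and $B_{xs}$ are each one-dimensional in bottom degree, spanned by $c_\bot^xc_\bot^s$ and $c_\bot^{xs}$, one may and must take the unique map sending the former to the latter (and similarly for the maps $\beta_{x,s}$ and $\gamma_{x,s}$). You need to supply this (or an equivalent rigidification) before your recursion produces actual canonical morphisms. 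Two smaller points: your appeal to ``positivity forcing the ambiguity into a degree with a canonical representative'' should be replaced by the explicit computation that the three Hom spaces $\mathrm{Hom}^0(B_xB_s,B_{xs})$, $\mathrm{Hom}^{-1}(B_{xs}B_s,B_{xs})$, $\mathrm{Hom}^1(B_{xs},B_x)$ are one-dimensional; and your final step is not literally a citation of the classical theorem, since the canonical leaves land in the indecomposable $B_x$ rather than in a Bott--Samelson $B_{\un{x}}$ --- one must rerun (not merely quote) the linear-independence and spanning argument, as the paper indicates for its Theorem \ref{Theone}.
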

This theorem leads to a
natural refinement of Deodhar's proposal:

\begin{prob}
 Find a subset $X_{x,\un{y}}^L \subset \widetilde{X}_{x,\un{y}}$ such that the
    composition of the inclusion, canonical light leaves  and the canonical surjection
\[
\bigoplus_{e \in X_{x,\un{y}}^L} \RM e \into 
\bigoplus_{e \in \widetilde{X}_{x,\un{y}}} \RM e 
\stackrel{\CLL}{\rightarrow} D_{x, \un{y}}  \onto D_{x,y}
\]
is an isomorphism of graded vector spaces.
\end{prob}

If the choice of the subset $X_{x,\un{y}}^L$ could be made canonically
we would regard it as a solution to the counting problem
above. Moreover, the map $\CLL$ has the potential to explain why a
canonical choice is difficult in general, by recasting the problem as
one of linear algebra.

The easiest situation is when the subset of non-zero elements in $$\{ \pi\circ \CLL(e)  \, \vert \,  e \in
\widetilde{X}_{x,\un{y}}\},$$ already constitutes
a basis of $D_{x,y}$. Here we have no choice: we must define $X_{x,\un{y}}^L$ to
be those $e$ in $\widetilde{X}_{x,\un{y}}$ whose image is non-zero under
$\pi \circ \CLL$. This situation does occur ``in nature''. Namely it
is the case for dihedral groups, Universal Coxeter groups, 
and whenever $h_{x,y} = v^{\ell(y) -
  \ell(x)}$ (``rationally smooth case''). It is interesting to note that in these
cases there already exist closed and combinatorial formulas for
Kazhdan-Lusztig polynomials. We feel our result gives a satisfying
explanation as to ``why'' there exist relatively straightforward
formulas in these cases.

%

\begin{remark}
  The basic observation in this paper is that certain morphisms
  (``light leaves'') may be made canonical in the presence of
  Soergel's conjecture. This observation was made during a visit of GW
  to NL at the Universidad de Chile in 2015, and has been shared with the
  community since. Subsequently, this idea has been pushed much
  further: In \cite{PatimoGrassmannian} Patimo studies the case of
  Grassmannians in detail; and in \cite{LP} the first author
  and Patimo study the case of affine type $A_2$. In both
  settings the authors find that the ``canonical light
  leaves''\footnote{In the setting of the Grassmannian considered in \cite{PatimoGrassmannian} these are singular variants
    (in the sense of singular Soergel bimodules) of
    the maps considered in the present work.} associated to different
  reduced expressions yield many different bases for intersection
  cohomology, and the question of relating them in interesting ways
  remains open. In particular, the easy case considered in the
  previous paragraph is certainly not indicative of the general
  setting, and the ``potentially fascinating question'' raised a few
  paragraphs ago is very much alive. We wrote this paper in order to record the basic
  observation in the hope that we and others may take it up in the
  future.
\end{remark}

\textit{Acknowledgements.}
The first author was supported by  Fondecyt No 1160152.

\section{Background}

In the following, we recall some standard background in
Kazhdan-Lusztig theory and Soergel bimodules. References include 
\cite{KLp, SoeKL, SHC, SB, EW, LLL}. There is also a book \cite{EMTW}
on the way.

\subsection{Coxeter group combinatorics}\label{Cox}

Let $(W,S)$ be a Coxeter group with  length function $\ell$ and  Bruhat order $\le$.
An \emph{expression} $\un{x} = (s_1,s_2, \dots, s_m)$ is a word in the
alphabet $S$ (i.e. $s_i \in S$ for all $i$). Its \emph{length} is
$\ell(\un{x}) = m$.

If $\un{x} = (s_1,s_2, \dots, s_m)$ is an
expression, we let $x := s_1 s_2 \dots s_m$ denote the product
in $W$. 
Given an expression $\un{x} = (s_1,s_2, \dots, s_m)$,
a \emph{subexpression} of $\un{x}$ is a word $\un{e} = e_1e_2 \dots
e_m$ of length $m$ in the alphabet $\{ 0, 1 \}$. We will write $\un{e}
\subset \un{x}$ to indicate that $\un{e}$ is a subexpression of
$\un{x}$. We set
\[
\un{x}^{\un{e}} := s_1^{e_1} s_2^{e_2} \dots s_m^{e_m} \in W
\]
and say that $\un{e} \subset \un{x}$ \emph{expresses}
$\un{x}^{\un{e}}$.

For $1\leq i\leq m$, we define  $w_i:= s_1^{e_1} s_2^{e_2} \dots s_i^{e_i}$. We also define $d_i\in \{U, D\}$ (where $U$ stands for \emph{Up} and $D$ for \emph{Down}) in the following way: 
\begin{equation*}
d_i :=
\begin{cases}
U & \text{if } w_{i-1}s_i>w_{i-1},\\
D & \text{if } w_{i-1}s_i<w_{i-1}.
\end{cases}
\end{equation*}
We write the decorated sequence $(d_1e_1, \ldots, d_me_m)$.  \emph{Deodhar's defect} $\df$  is defined by 

$$\df(e):= \vert \{\, i\,  \vert\,  d_ie_i=U0  \} \vert - \vert \{ \, i\,  \vert\, d_ie_i=D0  \}\vert  $$

\subsection{Hecke algebras}\label{Hecke}
For the basic definitions of Hecke algebras and Kazhdan-Lusztig polynomials we follow \cite{SoeKL}. Let $(W,S)$ be a Coxeter system. Recall that the \emph{Hecke algebra}  $\mathcal{H}$ of  $(W,S)$ is the algebra with free $\mathbb{Z}[v,v^{-1}]$-basis given by symbols $\{h_x\}_{x\in W}$ and multiplication given by
\begin{equation*}
h_xh_s:=
\begin{cases}
h_{xs}& \text{if } xs>x,\\
(v^{-1}-v)h_x+h_{xs} & \text{if } xs<x.
\end{cases}
\end{equation*}

 We can define a $\mathbb{Z}$-module morphism
 $\overline{(-)}:\mathcal{H}\rightarrow \mathcal{H}$ by the formula
 $\overline{v}=v^{-1}$ and $\overline{h_x}=(h_{x^{-1}})^{-1}.$ It is
 a ring morphism, and we call it the \emph{duality} in the Hecke
 algebra. The \emph{Kazhdan-Lusztig basis} of $\mathcal{H}$ is denoted
 by $\{b_x\}_{x\in W}$. It  is a $\mathbb{Z}[v,v^{-1}]-$basis of
 $\mathcal{H}$ and it is characterised by the two conditions
\[ \overline{b_x}=b_x \qquad \text{and} \qquad b_x\in h_x+\sum_{y\in W}v\mathbb{Z}[v]h_y \]
for all $x\in W$.  If we write $b_x= h_x+\sum_{y\in W}h_{y,x}h_y$ then the \emph{Kazhdan-Lusztig polynomials} (as defined in \cite{KLp}) $p_{y,x}$ are defined by the formula $p_{y,x}=v^{l(x)-l(y)}h_{y,x},$ and $C'_x=b_x$ (their $q^{-1/2}$ is our $v$).

Let us define the $\mathbb{Z}[v,v^{-1}]$-bilinear form 
$$(-,-):\mathcal{H}\times \mathcal{H}\rightarrow \mathbb{Z}[v,v^{-1}],$$
given by $(h_x,h_y):=\delta_{x,y}$. A useful property of this pairing is that $(b_x,b_y)\in v\mathbb{Z}[v]$ if $x\neq y$ and $(b_x,b_x)\in 1+v\mathbb{Z}[v]$.
\subsection{Soergel bimodules} 

We fix a realisation $\hg$ of our Coxeter system $(W,S)$ over the real
numbers $\RM$. That is, $\hg$ is a real vector space and 
we have fixed roots $\{ \a_s \}_{s \in S} \subset \hg^*$ and coroots
$\{ \alpha_s^\vee \}_{s \in S} \subset \hg$ such that the familiar formulas from
Lie theory define a representation of $W$ of $\hg$ and $\hg^*$.

Throughout, we
assume that this is a realisation for which Soergel's conjecture holds. For
example we could take $\hg$ to be the realisation from \cite{SB,
  EW2}. We could also take $\hg$ to be the \emph{geometric representation} \cite{LibEq}
so that $\hg = \bigoplus \RM \alpha_s^\vee$ and for $t\in S,$ the element $\alpha_t \in \hg^*$
is defined by $\langle \alpha_t, \alpha_s^\vee \rangle =
-\cos(\pi/m_{st})$, where $m_{st}$ denotes the order (possibly
$\infty$) of $st \in W$.

Having fixed $\hg$ we define $R = S(\hg^*) = \mathcal{O}(\hg)$ to be
the symmetric algebra on $\hg^*$ (alias the polynomials functions on
$\hg$), graded so that $\hg^*$ has degree 2. We denote by $\Bim$ the
category of $\mathbb{Z}-$graded $R$-bimodules which are finitely generated both as
left and right $R$-modules. Given an object $M = \bigoplus M^i \in
\Bim$ we denote by $M(k)$ the shifted bimodule, with $M(k)^i := M^{k+i}$.
 Given objects $M, N \in \Bim$ we denote
their tensor product by juxtaposition: $MN := M \otimes_R N$. This
operation makes $\Bim$ into a monoidal category.  The Krull-Schmidt
theorem holds in $\Bim$. 

For any $s \in S$ we denote by $R^s \subset R$ the $s$-invariants in
$R$. We consider the bimodule
\[
B_s := R \otimes_{R^s} R(1).
\]
 Given an
expression $\un{w} = (s_1, \dots, s_m)$ we consider the \emph{ Bott-Samelson
bimodule}
\[
B_{\un{w}} := B_{s_1}B_{s_2} \dots B_{s_m}.
\]

The category $\BC$ of  \emph{Soergel bimodules} is defined to be the full, strict (i.e. closed
under isomorphism), additive (i.e. $M, N \in \BC \Rightarrow M \oplus
N \in \BC$),
monoidal (i.e. $M, N \in \BC \Rightarrow MN \in \BC$) category of $\Bim$ which contains $B_s$ for all $s \in S$ and
is closed under shifts $(m)$ and direct summands.

\begin{notation}
For Soergel bimodules $M$ and $N$, we denote by $\mathrm{Hom}^i(M,N)$ the degree $i$ morphisms in $\mathrm{Hom}(M,N), $ where the latter is the set of all $R$-bimodule morphisms. 
\end{notation}
\subsection{Soergel's theorems and Soergel's conjecture}
Soergel proved the following facts (usually known as \emph{Soergel's
  categorification theorem}). For all $w\in W$ there exists a unique
(up to isomorphism) bimodule $B_w$ which occurs as a direct summand of
$B_{\un{w}}$ for any reduced expression $\un{w}$ of $w$, and is not a summand of (some shift of) $B_{\un{y}}$ for any shorter sequence $\un{y}$.  The set $\{B_w\, \vert \, w\in W\}$ constitutes a complete set of non-isomorphic indecomposable Soergel bimodules, up to isomorphism and grading shift. There is a unique isomorphism of $\mathbb{Z}[v,v^{-1}]$-algebras between the split Grothendieck group of $\BC$ and the Hecke algebra
$$\mathrm{ch}:[\BC]\rightarrow \mathcal{H},$$
satisfying $\mathrm{ch}([B_s])=b_s$ and $\mathrm{ch}([R(1)])=v$.

Soergel gave a formula to calculate the graded dimensions of the Hom
spaces in $\BC$ in the Hecke algebra. We need some notation to explain it. Given a finite dimensional graded $\mathbb{R}$-vector space $V=\oplus V^i$, we define
$${\mathrm{gdim}}(V)=\sum\mathrm{dim}(V^i)v^{i}\in
\ZM_{\ge 0}[v,v^{-1}].$$
Given
a finitely-generated and free graded right $R$-module $M$, we define
$${\mathrm{grk}}(M):={\mathrm{gdim}}(M\otimes_R\mathbb{R}).$$
The following is  \emph{Soergel's hom formula}. Let $M,N\in \BC$,
then $\Hom(M,N)$ is finitely-generated and free as a right $R$-module, and
$${\mathrm{grk}}\mathrm{Hom}(M,N)={(\mathrm{ch}(M), \overline{\mathrm{ch}(N)})}.$$

 \emph{Soergel's conjecture} (now a theorem by Elias and the second
 author \cite{EW2}) is the following statement:
\[
\mathrm{ch}([B_x])=b_x\quad \text{for all $x\in W.$}
\]
 We remark that when Soergel's conjecture is satisfied (the case considered in this paper), by Soergel's hom formula and by the useful property at the end of Section \ref{Hecke}, we obtain a complete description of the degree zero morphisms between indecomposable objects: 
\begin{equation}\label{homb}
\mathrm{Hom}_0(B_x,B_y)\cong \delta_{x,y} \mathbb{R}.
\end{equation}


\subsection{Double leaves}
An important result in the theory of Soergel bimodules is a theorem of
the first author giving a ``double leaves'' basis of morphisms between
Soergel bimodules. Let $\un{w} = (s_1,
\dots, s_m)$ denote an expression. For any subexpression $\un{e}$ of
$\un{w}$ the first author associates a morphism
\[
 \LL_{\un{w}, \un{e}} : B_{\un{w}} \to B_{\un{x}}(\df(\un{e})).
\]
Here $\un{x}$ is a fixed but arbitrary reduced expression of $x =
\un{w}^{\un{e}}$. The definition of $ \LL_{\un{w}, \un{e}}$ is
inductive, and will not be given here, as we will not need it. However it is important to note
that the definition of $ \LL_{\un{w}, \un{e}}$ depends on choices
(fixed reduced expressions for elements and fixed sequences of braid
relations between reduced expressions) which seem difficult to
make canonical.

However, once one has fixed such choices one can produce a basis of
homomorphisms between any two Bott-Samelson bimodules. Indeed, a
theorem of the first author \cite[Thm. 3.2]{LLC} (see also \cite[Thm 6.11]{EW})
asserts that the set
\[
\bigsqcup_{x \in W} \{ \LL_{\un{w}, \un{e}}^* \circ \LL_{\un{z}, \un{f}} \; | \;
\un{e} \subset \un{w}, \un{f} \subset \un{z} \text{ such that } \un{w}^{\un{e}} =
\un{z}^{\un{f}}  = x  \}
\]
is a free $R$-basis for $\mathrm{Hom}(B_{\un{z}}, B_{\un{w}})$.

\subsection{The sets $D_{x,\un{y}}$ and $D_{x,y}$}

Let $M,N\in \BC$.
For $x\in W$ we denote by
\[
\HOM_{<x}(M,N)\subset \HOM(M,N)
\]
the vector
space generated by all morphisms $f:M\rightarrow N$ that factor
through $B_y(n)$ for some $y<x$ and $n\in \mathbb{Z}$. Let
\[
\HOM_{\not <x}(M,N):=\HOM(M,N)/\HOM_{<x}(M,N).
\]
We denote by $\BC_{\not <x}$ the category whose objects coincide with
those of $\BC$ and for any $M,N\in  \BC_{\not <x} $ we have $\HOM_{\BC_{\not <x}}(M,N):= \HOM_{\not <x}(M,N)$. 

Consider the sets $$\widehat{D}_{x,\un{y}}:=\HOM_{\not <x}(B_{\un{y}}, B_x),$$
 $$D_{x,\un{y}}:=\HOM_{\not <x}(B_{\un{y}}, B_x)\otimes_R \mathbb{R} \ \mathrm{and}$$ 
 $$D_{x,y}:=\HOM_{\not <x}(B_{{y}}, B_x)\otimes_R \mathbb{R}.$$
 
The set $D_{x,y}$ is  a canonical direct summand of
$D_{x,\un{y}}$. This is because, when Soergel's conjecture is
satisfied, there is one element in $\mathrm{End}(B_{\un{y}})$
projecting to $B_y$ called the \emph{favorite projector} (see
\cite[\S 4.1]{LLC}). Let us give the construction of this
projector. Let us assume (by induction) that projection and inclusion
maps have been constructed
\[
B_{\un{y}} \stackrel{p_{\un{y}}}{\onto} B_y \stackrel{i_{\un{y}}}{\into} B_{\un{y}}
\]
for some reduced expression $\un{y}$ of $y$. Suppose $y<ys,$ then 
$$b_yb_s=b_{ys}+\sum_{x<ys}m_xb_x, \ \ \mathrm{with}\ m_x\in
\mathbb{Z}_{\ge 0}. $$
By Soergel's conjecture this implies 
$$B_yB_s=B_{ys}\oplus\bigoplus_{x<ys}B_x^{\oplus m_x}. $$
By \eqref{homb}, there is only one projector in this
space projecting to $B_{ys}$, which we write as
\[
B_yB_s \stackrel{p_{y,s}}{\onto} B_{ys} \stackrel{i_{y,s}}{\into} B_yB_s.
\]
We now define the inclusion and projection maps of our favourite
projector to be the compositions
\[
B_{\un{y}}B_s \stackrel{p_{\un{y}} \id_{B_s}}{\onto} B_yB_s \stackrel{p_{y,s}}{\onto} B_{ys} \stackrel{i_{y,s}}{\into} B_yB_s\stackrel{i_{\un{y}}\id_{B_s}}{\into} B_{\un{y}}B_s.
\]

\section{Canonical light leaves}
This section contains the new observations of this paper. We explain
that certain canonical elements and maps allow one to define canonical
light leaves, from which our main theorem (Theorem \ref{main})
follows easily.

\begin{remark}
  In this paper we use ``canonical'' to mean ``not depending on any
  choices''. We do not use it in the stronger sense that is typical in
  Lie theory (i.e. to refer to the Kazhdan-Lusztig basis of the Hecke
  algebra, or the canonical basis of quantum groups).
\end{remark}

\subsection{Some canonical elements} What do we really mean when we write
$B_x$? In the general setting of Soergel bimodules, we mean a
representative of an equivalence class of isomorphic bimodules, where
each isomorphism is not canonical. In our setting (where Soergel's
conjecture is available), we mean a representative of an equivalence
class of isomorphic bimodules, where each isomorphism is canonical up
to an invertible scalar (in our case $\RM^*$). We now explain a
somewhat adhoc way to fix this scalar, so that $B_x$ is defined up to
unique isomorphism.

Consider an expression $\un{x}$, and the corresponding Bott-Samelson
bimodule $B_{\un{x}}$. It contains a canonical element
\[
c_\bot^{\un{x}} := 1 \otimes 1 \otimes \dots \otimes 1 \in B_{\un{x}}.
\]
(Note that $B_{\un{x}}$ is zero below degree $-\ell(\un{x})$ and is
spanned by $c_\bot$ in degree $-\ell(\un{x})$; $\bot$ stands for
``bottom''.) We denote by $c_\bot^x \in B_x$ the image of
$c_\bot^{\un{x}}$ under the favourite projector, where $\un{x}$ is a
reduced expression for $x$.

From now on we will always understand $B_x$ to mean $B_x$ together
with the element $c_\bot \in B_x$. Given two representatives $(B_x, c_\bot^x)$ and
$(\tilde{B}_x, \tilde{c}_\bot^x)$, there is a unique isomorphism $B_x \to \tilde{B}_x$ which sends
$c_\bot^x$ to $\tilde{c}_\bot^{x}$.

\begin{remark}
  Consider the following commutative diagram
\[
\begin{tikzpicture}[scale=0.7]
\node (ul) at (-1,1) {$B_{\un{x}}$};
\node (ur) at (1,1) {$B_{\un{x}'}$};
\node (ll) at (-1,-1) {$B_x$};
\node (lr) at (1,-1) {$B_x$};
\draw[->] (ul) to node[above] {$\varphi$} (ur);
\draw[->] (ul) to node[left] {$p_{\un{x}}$} (ll);
\draw[->] (ur) to node[right] {$p_{\un{x}'}$} (lr);
\draw[->] (ll) to node[above] {$\sim$} node[below] {$\zeta$} (lr);
\end{tikzpicture}
\]
where: $\varphi$ is a braid move (see \cite[\S 4.2]{EW}, where they
are called \emph{rex moves}); $p_{\un{x}}$ (resp. $p_{\un{x}'}$) are the projections in
the favourite projector associated to $\un{x}$ and $\un{x}'$; and
$\zeta$ is the induced isomorphism. One may check that
$\zeta(c_\bot^x) = c_\bot^x$. (We will not need this fact below.) This
gives another sense to which $c_\bot$ is canonical.
\end{remark}

\subsection{Some canonical maps} In this section we introduce the
canonical maps which will be our building blocks for the definition of
canonical light leaves, in the next section.


\begin{lem}\label{hom}
Let $x\in W$ and $s\in S$ and suppose that $x< xs$. The spaces
\[
\mathrm{Hom}^0(B_xB_s,B_{xs}), \quad
\mathrm{Hom}^{-1}(B_{xs}B_s, B_{xs}) \quad \text{and} \quad
\mathrm{Hom}^1(B_{xs}, B_{x})
\]
are all one-dimensional.
\end{lem}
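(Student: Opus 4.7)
The strategy is to apply Soergel's hom formula ${\mathrm{grk}}\,\mathrm{Hom}(M,N) = (\mathrm{ch}(M), \overline{\mathrm{ch}(N)})$ together with Soergel's conjecture ($\mathrm{ch}(B_w) = b_w$) and self-duality ($\overline{b_w} = b_w$), reducing each dimension count to a Hecke algebra computation. The three graded ranks in question equal the pairings $(b_xb_s, b_{xs})$, $(b_{xs}b_s, b_{xs})$ and $(b_{xs}, b_x)$ respectively. Since a free graded right $R$-module of graded rank $\sum_k a_k v^k$ has degree-$i$ part of dimension $\sum_k a_k \dim R^{i-k}$, and since $R$ sits in non-negative even degrees with $R^0 = \mathbb{R}$, this collapses to $a_i$ as soon as $a_k = 0$ for every $k < i$. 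So for each of the three cases it suffices to show that the relevant pairing lies in $v^i + v^{i+1}\mathbb{Z}[v]$, where $i \in \{0,-1,1\}$ is the target degree.

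First case: since $x < xs$, Hecke multiplication gives $b_xb_s = b_{xs} + \sum_{z \neq xs} c_z b_z$, and the ``useful property'' recalled in Section \ref{Hecke} (namely $(b_w,b_w) \in 1 + v\mathbb{Z}[v]$ and $(b_u,b_w) \in v\mathbb{Z}[v]$ for $u \neq w$) gives $(b_xb_s, b_{xs}) \in 1 + v\mathbb{Z}[v]$. Second case: since $(xs)s = x < xs$, the multiplication rule gives $b_{xs}b_s = (v+v^{-1})b_{xs}$, whence $(b_{xs}b_s, b_{xs}) \in (v+v^{-1})(1+v\mathbb{Z}[v])$, whose lowest term is $v^{-1}$ with coefficient $1$. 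Third case: expanding in the $h$-basis and using $(h_a, h_b) = \delta_{a,b}$, one has $(b_{xs}, b_x) = \sum_{z \leq x} h_{z,xs}h_{z,x}$; the $z = x$ term contributes $h_{x,xs}\cdot 1 = v$ (since $xs$ covers $x$), while every other term lies in $v\mathbb{Z}[v] \cdot v\mathbb{Z}[v] \subset v^2\mathbb{Z}[v]$, so the pairing has lowest term $v$ with coefficient $1$.

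The only subtle point is the translation between the graded rank of the whole free right $R$-module $\mathrm{Hom}(M,N)$ and the dimension of a single graded piece $\mathrm{Hom}^i(M,N)$; once that is understood, each of the three cases is a short manipulation with the Kazhdan-Lusztig pairing. There is no substantive obstacle: the three statements are uniform instances of a single principle, namely that the lowest term of the relevant pairing has coefficient $1$.
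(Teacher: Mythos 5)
Your proof is correct and follows essentially the same route as the paper: Soergel's hom formula plus Soergel's conjecture reduces each space to a coefficient in a Kazhdan--Lusztig pairing, and the three pairings are evaluated exactly as in the paper's proof (the only cosmetic difference is that for the first space the paper invokes the decomposition $B_xB_s = B_{xs} \oplus \bigoplus B_y^{\oplus m_y}$ together with \eqref{homb}, while you unwind that to the underlying pairing computation $(b_xb_s,b_{xs}) \in 1 + v\ZM[v]$). Your explicit justification of the passage from the graded rank of the free $R$-module to the dimension of a single graded piece is a point the paper leaves implicit, and it is handled correctly.
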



\begin{proof} We consider the spaces one at a time. As in last section, we have $$B_xB_s=B_{xs}\oplus\bigoplus_{y<xs}B_y^{\oplus m_y}$$
and \eqref{homb} allows us to conclude that $\mathrm{Hom}^0(B_xB_s,B_{xs})$ is one dimensional.

We now consider the second space. By Soergel's hom formula and
Soergel's conjecture, the dimension of
\[
\mathrm{Hom}^{-1}(B_{xs}B_s, B_{xs})
\]
is the coefficient of $v^{-1}$ in the Laurent polynomial
$(b_{xs}b_s,b_{xs})$. But $$b_{xs}b_s=(v+v^{-1})b_{xs}.$$ As $(b_{xs},b_{xs})\in
1+v\mathbb{Z}[v]$, we conclude that $\mathrm{Hom}^{-1}(B_{xs}B_s,
B_{xs})\cong \mathbb{R}.$  

For the last case, we need to calculate the coefficient of $v$ in $(b_{xs},b_{x})$, i.e. in 
$$(h_{xs}+vh_{x}+\sum_{\substack{y<xs \\ y\neq x}}P_yh_y\ ,\ h_{x} +\sum_{z<x} Q_zh_z )$$
where $P_y, Q_z\in v\mathbb{Z}[v]$. By definition of the pairing, it
is clear that the coefficient of $v$ is $1$. 
\end{proof}




Let $x \in W$ and $s \in S$ be as in the lemma above (i.e. $x<xs$). Both $B_xB_s$ and $B_{xs}$ are one-dimensional in degree
$-\ell(x) -1$, where they are spanned by $c_\bot^x c_\bot^s$ and
$c_\bot^{xs}$ respectively. (We write $c_\bot^x c_\bot^s$ instead of
$c_\bot^x \otimes c_\bot^s$.) Hence there exists a unique map
\begin{equation}
  \label{eq:alpha}
  \alpha_{x,s} : B_xB_s \to B_{xs}
\end{equation}
which maps $c_\bot^x c_\bot^s$ to $c_\bot^{xs}$. Similar
considerations show that there exists a unique map
\begin{equation}
  \label{eq:beta}
  \beta_{x,s} : B_{xs}B_s \to B_{xs}(1)
\end{equation}
resp.
\begin{equation}
  \label{eq:gamma}
  \gamma_{x,s} : B_{xs} \to B_{x}(1)
\end{equation}
mapping $c_\bot^{xs} c_\bot^s$ to $c_\bot^{xs}$ (resp. $c_\bot^{xs}$ to
$c_\bot^x$).

\subsection{The construction} We will use the maps $\alpha_{x,s},
\beta_{x,s}$ and $\gamma_{x,s}$ constructed above. We will also use
the \emph{multiplication map}
\[
m_s : B_s \to R(1) : f \otimes g \mapsto fg.
\]

\begin{remark}
 The reader may easily check that in fact $m_s = \gamma_{\id, s}$. 
\end{remark}

Consider the following data:
\begin{enumerate}
\item an expression (not necessarily reduced) $\un{y}=(s_1, \ldots,
  s_n)$;
\item elements $x\in W$, $s\in S$; and
\item $f:B_{\un{y}}\to B_x$.
\end{enumerate}
To this data, we will associate two new maps:
\[
f0:B_{\un{y}}B_s\to B_x \quad \text{and} \quad f1:B_{\un{y}}B_s\to B_{xs}.
\]
These maps are constructed as follows: 
If $x<xs$, define
\[
f0:=f\otimes m_s \quad \text{and} \quad  f1:=\alpha_{x,s}\circ (f\otimes \mathrm{id}).
\]

If $xs < x$, define
\[f0:=\beta_{xs,s}\circ(f\otimes \mathrm{id}) \quad \text{and}\quad 
f1:=\gamma_{xs,s} \circ  \beta_{xs,s}\circ(f\otimes \mathrm{id}).
\]

Given an expression $\un{w}$ and a subexpression $\un{e}$ define the \emph{canonical light leaf} $$\CLL_{\un{w},\un{e}}:=\mathrm{id}\un{e},$$
where $\mathrm{id}$ means $\mathrm{id}\in \mathrm{End}(R)$ and for example $\mathrm{id}(0,1,0)$ means $(((\mathrm{id}0)1)0)$. 

\begin{ex}
  If $\un{x} = (s_1, \dots, s_m)$ is reduced, and $\un{e} = (1,1, \dots, 1)$ then
  $\CLL_{\un{w},\un{e}}$ agrees with the projection in the favourite
  projector. If $\un{e} = (0,0,\dots,0)$ then $\CLL_{\un{w},\un{e}} =
  m_{s_1} \otimes \dots \otimes m_{s_m}$.
\end{ex}

The proof of the following theorem is essentially the same as in  \cite[Thm. 3.2]{LLC} and  \cite[Thm 6.11]{EW}.
\begin{thm}\label{Theone}
The set 
\[
\bigsqcup_{x \in W} \{ \CLL_{\un{w}, \un{e}}^* \circ \CLL_{\un{z}, \un{f}} \; | \;
\un{e} \subset \un{w}, \un{f} \subset \un{z} \text{ such that } \un{w}^{\un{e}} =
\un{z}^{\un{f}}  = x  \}
\]
is a free $R$-basis for $\mathrm{Hom}(B_{\un{z}}, B_{\un{w}})$.
\end{thm}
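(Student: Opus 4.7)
The plan is to follow the template of \cite[Thm.~3.2]{LLC} and \cite[Thm.~6.11]{EW}, replacing the non-canonical light leaves $\LL_{\un{w},\un{e}}$ by the canonical versions $\CLL_{\un{w},\un{e}}$ constructed above. The argument has two main components: a matching of graded ranks via Soergel's hom formula, and an inductive local statement saying that the canonical light leaves respect the Bruhat-order filtration on morphism spaces.

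For the rank count I would first establish the combinatorial identity in the Hecke algebra
\[
b_{s_1}\cdots b_{s_n} = \sum_{\un{f}\subset \un{z}} v^{\df(\un{f})}\, h_{\un{z}^{\un{f}}},
\]
which follows by a direct induction on $n$ from the multiplication rules for the standard basis. Since $\overline{(-)}$ is a ring morphism fixing every $b_y$, Soergel's hom formula together with $(h_x,h_y)=\delta_{x,y}$ gives
\[
\mathrm{grk}\,\Hom(B_{\un{z}}, B_{\un{w}}) = \sum_{x\in W}\Bigl(\sum_{\un{f}\subset\un{z},\, \un{z}^{\un{f}}=x} v^{\df(\un{f})}\Bigr)\Bigl(\sum_{\un{e}\subset\un{w},\, \un{w}^{\un{e}}=x} v^{\df(\un{e})}\Bigr),
\]
which coincides with the graded cardinality of the proposed basis (using that $\CLL^*_{\un{w},\un{e}}\circ\CLL_{\un{z},\un{f}}$ lives in degree $\df(\un{e})+\df(\un{f})$). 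It therefore suffices to prove linear independence.

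Linear independence reduces to the following local claim: for each $x \in W$ and each expression $\un{w}$, the canonical light leaves $\CLL_{\un{w},\un{e}}$ with $\un{w}^{\un{e}} = x$ descend to a free $R$-basis of $\HOM_{\not<x}(B_{\un{w}}, B_x)$. I would prove this by induction on $\ell(\un{w})$, mirroring the recursive construction of $\CLL$. Writing $\un{w} = (\un{w}', s)$, the step splits into four subcases distinguished by whether $xs > x$ or $xs < x$ and by the last digit of $\un{e}$; in each subcase the inductive hypothesis for $\CLL_{\un{w}',\cdot}$ combined with the specific behaviour of $\alpha_{x,s}$, $\beta_{x,s}$, $\gamma_{x,s}$, or $m_s$ yields the basis statement for $\un{w}$. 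Once this local claim is established, the theorem follows by the standard Bruhat-filtration argument: $\Hom(B_{\un{z}}, B_{\un{w}})$ is filtered so that the associated graded piece at $x$ is (a tensor product of) the two local morphism spaces, and the double compositions $\CLL^*_{\un{w},\un{e}}\circ\CLL_{\un{z},\un{f}}$ yield bases of these layers.

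I expect the main obstacle to be the induction step of the local claim. The canonical maps $\alpha_{x,s}$, $\beta_{x,s}$, $\gamma_{x,s}$ are specified only by their action on $c_\bot$, and it is not a priori obvious that composition with them behaves predictably modulo $\HOM_{<x}$: one must rule out the possibility that a canonical light leaf collapses into $\HOM_{<x}$, and one must show that distinct subexpressions produce linearly independent images. The cleanest route I can see is to strengthen the induction to the statement that, modulo $\HOM_{<x}$, each $\CLL_{\un{w},\un{e}}$ is a nonzero scalar multiple of the classical light leaf $\LL_{\un{w},\un{e}}$; linear independence then transfers directly from \cite[Thm.~6.11]{EW}.
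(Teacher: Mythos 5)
The paper offers no proof of Theorem \ref{Theone} beyond the remark that it is ``essentially the same as'' \cite[Thm.~3.2]{LLC} and \cite[Thm.~6.11]{EW}, and your outline is precisely a reconstruction of that argument: Deodhar's identity together with Soergel's hom formula for the graded rank count, followed by the inductive local claim and the Bruhat-filtration argument. Your proposed strengthening --- that modulo $\HOM_{<x}$ each $\CLL_{\un{w},\un{e}}$ equals a nonzero scalar multiple of $p_{\un{x}}\circ\LL_{\un{w},\un{e}}$, so that linear independence transfers from the classical double leaves --- is exactly what the one-dimensionality statements of Lemma \ref{hom} are set up to deliver, so this is the same approach as the paper's intended proof.
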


Now we can explain why this theorem proves Theorem \ref{main}. 
By Theorem \ref{Theone}, the graded set  $\{\CLL_{\un{y}, \un{e}} \, \vert\, \mathrm{with}\, \un{e}\, \mathrm{expressing}\, x  \}$ is naturally an $R$-basis of $\widehat{D}_{x,\un{y}}$, thus it gives an $\mathbb{R}$-basis of $D_{x,\un{y}}.$ So, in summary, the canonical map CLL in Theorem \ref{main} is the $\mathbb{R}-$linear map defined  on the generators  $\un{e}\in \widetilde{X}_{x,\un{y}} $ by $\un{e}\mapsto \mathrm{CLL}_{\un{y}, \un{e}}. $

%
%
%
%
%
%
%
%
%

\bibliographystyle{myalpha}
\bibliography{gen}

\end{document}